\date{}
\newtheorem{theorem}{Theorem}[section]
\newtheorem{lemma}[theorem]{Lemma}
\theoremstyle{definition}
\newtheorem{rmk}[theorem]{Remark}
\newtheorem{defin}[theorem]{Definition}
\newtheorem{ex}[theorem]{Example}
\def\F{{\mathbb F}}
\def\Z{{\mathbb Z}}
\newcommand{\diam}{\mathrm{diam\,}}
\newcommand{\dist}{d}
\newcommand{\E}{\mathrm{E\,}}
\newcommand{\V}{\mathrm{V\,}}
\renewcommand{\ge}{\geqslant}
\renewcommand{\le}{\leqslant}
\begin{document}

\title{Orthogonality graph of the algebra of upper triangular matrices\thanks{This work is partially  financially supported by grants  RFBR  15-01-01132, RFBR 15-31-20329  and  MD-962.2014.01 } }
\author{B.R. Bakhadly\\ Lomonosov Moscow State University, \\ Faculty of Mechanics and Mathematics}
\date{}

\maketitle

\begin{abstract}
We study the connectedness and the diameter of orthogonality graphs of upper triangular matrix algebras over arbitrary fields.

{\bf Keywords}: orthogonality, graphs, upper triangular, matrices.

\end{abstract}

\section{Introduction}

Nowadays binary relations on associative rings, in particular, on the matrix algebra can be investigated with the help of graph theory if we study the so-called {\it relation graph} whose vertices are the elements of some set and two vertices are connected by an edge if and only if the corresponding elements are in this relation. Commuting graphs and zero divisor graphs are the examples of relation graphs that have been studied intensively during the last $20$ years. This article concerns orthogonality graphs.
In the paper~\cite{BGM} the notion of graph generated by the mutual orthogonality relation for the elements of an associative ring was introduced. The authors of~\cite{BGM} computed the diameters of orthogonality graphs of the full matrix algebra over an arbitrary field and its subsets consisting of diagonal, diagonalizable, triangularizable, nilpotent, and niltriangular matrices.
The relation of orthogonality can be found in~\cite{a31, Ovchin, Semrl} where some partial orders on matrix algebra and matrix transformations which are monotone with respect to these orders are studied. Matrix orders are widely used in various fields of algebra and have applications in mathematical statistics and many other areas of mathematics~\cite{a32}. For the detailed and self-contained information on this topic see~\cite{a2,a4,a3,a5,BGM} and the references therein.

The main subject of our research is connected with triangular matrices. In~\cite{AR} Akbari and Raja proved that if $n \ge 2$ and $U$ is the set of all upper triangular matrices, then for every algebraic division ring $D$ the commuting graph of $U$ is connected.
Besides, the zero divisor graph of upper triangular matrices over commutative rings was also investigated in papers~\cite{k1,k2,k3}.
The aim of this paper is to prove the connectedness and calculate the diameter of orthogonality graphs of upper triangular matrix algebras over arbitrary fields.

\medskip

Recall some definitions from   graph theory. The notions of  graph theory used in this article can be found for example in~\cite[Chapter 2]{Harari}.

A {\em graph\/} $\Gamma$ is a non-empty set of vertices $\V(\Gamma)$ and edges $\E(\Gamma)$.
If $v_1$, $v_2$ are two vertices and $e=(v_1,v_2)$ is the edge connecting them, then the vertex $v_1$ and the edge $e$ are {\em incident\/},
the vertex $v_2$ and the edge $e$ are also {\em incident\/}.
A {\em path (walk)\/} is a sequence of vertices and edges $v_0, e_1, v_1, e_2, v_2, \ldots , e_k, v_k$,
where any two neighbor elements are incident.
If $v_0=v_k$, then the path is {\em closed\/}.
The {\em length of a path\/}, denoted by $d$,  is the number of edges that it uses, under the condition that each edge is counted as many times as it occurs in the path.
For the path $M = v_0, e_1, v_1, e_2, v_2, \ldots, e_k, v_k$ the length of $M$ equals~$k$, regardless of whether the edges are repeated or not in the path.
The graph is said to be {\em connected\/}
if it is possible to establish a path from any vertex to any other vertex of the graph.
The {\em distance $\dist(u,v)$\/} between two vertices~$u$ and~$v$ in a~graph $\Gamma$ is the length of the shortest path between them.
If~$u$ and~$v$ are unreachable from each other, $\dist(u,v)=\infty$. It is assumed that $\dist(u,u)=0$ for any vertex $u$.
The {\em diameter $\diam(\Gamma)$\/} of a graph $\Gamma$ is the maximum of distances between vertices for all pairs of vertices in the graph.

Let us introduce some notations that will be needed in this paper.
Throughout our paper, $\F$~and $R$ denote an arbitrary field and an arbitrary associative ring with unity, respectively.
$M_{m,n}\left(\F\right)$ is the set of $m\times n$ matrices over $\F$,
$ M_{n}\left(\F\right)=M_{n,n}(\F)$ is the ring (or algebra) of  $n \times n$ matrices over $\F$,
$GL_{n}\left(\F\right)$ is the group of invertible matrices in $M_n(\F)$.
$T_n$ denotes the set of all upper triangular matrices in $M_n(\F)$.
$E_{ij}$ (or $E_{i,j}$) is a matrix whose $(i,j)$-entry is~1 and other entries are~0.
If $A$ is a matrix, then $A^t$ denotes the transpose of $A$.
It is considered that $\F^n=M_{n,1}(\F)$.
$0_{n \times m}$ and $0_n$ are zero matrices of sizes $n \times m$ and $n\times n$, respectively.
$I_r$ is the identity matrix of size $r \times r$ and $J_r$ is the $r\times r$ Jordan block with the eigenvalue $0$.

The following definition is well-known.
\begin{defin}
Two elements $r_1 \in R$ and $r_2 \in R$ are {\em orthogonal\/} if $r_1r_2 = r_2r_1 = 0$.

$O_{R}\left(X\right)$ denotes the set of elements of $R$ which are orthogonal to all elements of $X$, where $X$ is a subset of $R$.
\end{defin}

The definition of orthogonality graphs was introduced and investigated by the present author, Guterman, and Markova in~\cite{BGM}.
\begin{defin}[{\cite[Definition~2.15]{BGM}}]
With every ring $R$ one can associate the \textit{orthogonality graph} $O\left(R\right)$ with vertex set consisting of all non-zero two-sided zero divisors of $R$, in which two vertices are connected by an edge if and only if the corresponding elements of $R$ are orthogonal.
\end{defin}

\begin{ex}
Let $R = M_2(\Z_2)$.
Denote $A_{01} = \begin{pmatrix}
1 & 1 \\
1 & 1
\end{pmatrix},$
$A_{11} = \begin{pmatrix}
1 & 0 \\
1 & 1
\end{pmatrix},$
$A_{12} =\begin{pmatrix}
0 & 1 \\
1 & 1
\end{pmatrix},$
$A_{13} = \begin{pmatrix}
1 & 1 \\
0 & 1
\end{pmatrix},$
$A_{14} = \begin{pmatrix}
1 & 1 \\
1 & 0
\end{pmatrix},$
$A_{21} = \begin{pmatrix}
0 & 0 \\
1 & 1
\end{pmatrix},$
$A_{22} = \begin{pmatrix}
1 & 1 \\
0 & 0
\end{pmatrix},$
$A_{23} = \begin{pmatrix}
0 & 1 \\
0 & 1
\end{pmatrix},$
$A_{24}~=~\begin{pmatrix}
1 & 0 \\
1 & 0
\end{pmatrix},$
$A_{25} = \begin{pmatrix}
0 & 1 \\
1 & 0
\end{pmatrix},$
$A_{31} = \begin{pmatrix}
0 & 0 \\
0 & 1
\end{pmatrix},$
$A_{32} = \begin{pmatrix}
0 & 0 \\
1 & 0
\end{pmatrix},$
$A_{33} = \begin{pmatrix}
0 & 1 \\
0 & 0
\end{pmatrix},$
$A_{34} = \begin{pmatrix}
1 & 0 \\
0 & 0
\end{pmatrix}.$
Then for $R$ we have the following relation graphs:
\begin{center}
\includegraphics[scale = 0.5]{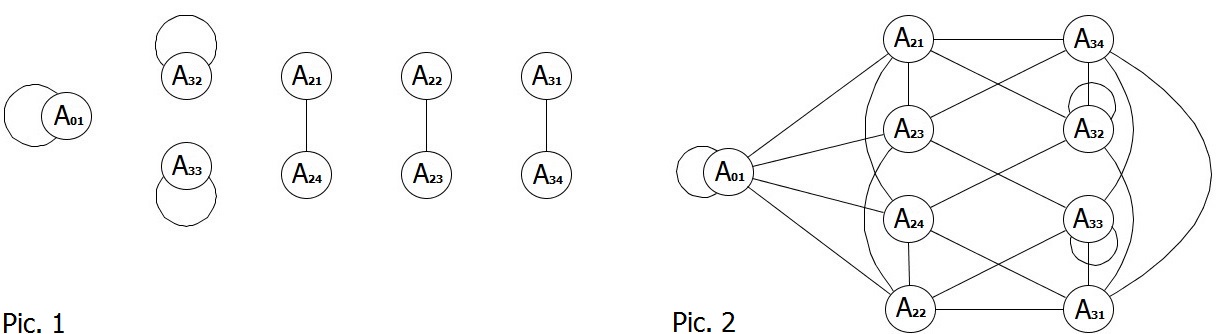}
\includegraphics[scale = 0.5]{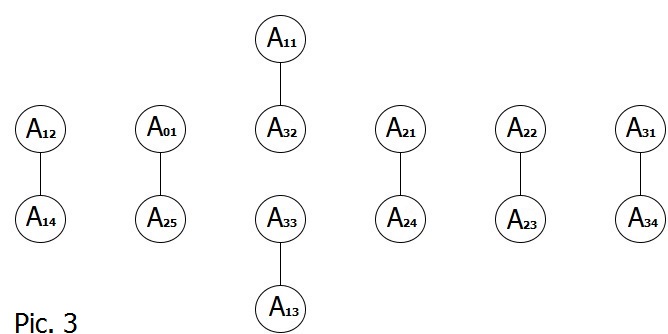}
\end{center}
where Pic.\ 1, Pic.\ 2, and Pic.\ 3 correspond to orthogonality, zero divisor, and commuting graphs, respectively.
\end{ex}

\section{Orthogonality graph of the algebra of upper triangular matrices} \label{S2}

Recall the result from~\cite{BGM} which will be used later.

\begin{lemma}[{\cite[Lemma~4.1]{BGM}}] The orthogonality graph $O\left(M_n\left(\F\right)\right)$  is empty for $n=1$. For $n=2$ the graph $O\left(M_n\left(\F\right)\right)$ is not connected and is the union of its connected subgraphs with the following sets of vertices:

1. The set $$V_1 =
\left\{{\begin{pmatrix}
   a & 0\\
   0 & 0
   \end{pmatrix}\Big| \, 0\ne a \in \F}\right\} \bigcup \left\{{\begin{pmatrix}
   0 & 0\\
   0 & b
   \end{pmatrix}\Big| \, 0\ne b \in \F}\right\};$$

2.  The set $$ V_2 =
\left\{{\begin{pmatrix}
   0 & 0\\
   a & 0
   \end{pmatrix}\Big| \, 0\ne a \in \F}\right\};$$

3.  The set $$V_3=
\left\{{\begin{pmatrix}
   0 & a\\
   0 & 0
   \end{pmatrix}\Big| \, 0\ne a \in \F}\right\};$$

4.  For every $ 0\neq\alpha \in \F$ the set
$$ V_{4,\alpha} =
\left\{{\begin{pmatrix}
   c & c\alpha\\
   0 & 0
   \end{pmatrix}\Big| \, 0\ne c \in \F}\right\} \bigcup \left\{{\begin{pmatrix}
   0 & d\\
   0 & -d/\alpha
   \end{pmatrix}\Big| \, 0\ne d \in \F}\right\};$$

5. For every $ 0\neq\alpha \in \F $  the set $$ V_{5,\alpha} =
\left\{{\begin{pmatrix}
   0 & 0\\
   c & c\alpha
   \end{pmatrix}\Big| \, 0\ne c \in \F}\right\} \bigcup \left\{{\begin{pmatrix}
   d & 0\\
   -d/\alpha & 0
   \end{pmatrix}\Big| \, 0\ne d \in \F}\right\};$$

6. For all $\  0\neq \alpha,\beta \in \F$  the set $$  V_{6,\alpha, \beta} =
\left\{{\begin{pmatrix}
   -\alpha a & a\\
   -\alpha \beta a & \beta a
   \end{pmatrix}\Big| \, 0\ne a \in \F}\right\} \bigcup \left\{{\begin{pmatrix}
   -\beta b & b\\
   -\alpha \beta b & \alpha b
   \end{pmatrix}\Big| \, 0\ne b \in \F}\right\}.$$ \label{lem_comp_m0.1}

The diameter of the connected component corresponding to each of the vertex sets $V_1$, $V_{4,\alpha}$, $V_{5,\alpha}$ equals~$1$ if  $\F=\Z_2$, and equals~$2$ if $|\F|>2$.

The vertex sets $V_{6,\alpha,\beta}$ with $\alpha \ne \beta$ are defined over fields with $|\F|>2$, and the diameters
of the corresponding connected components equal~$2$.

The diameter of the connected component corresponding to any of the vertex sets $V_2$, $V_3$, and $V_{6,\alpha,\alpha}$ equals~$0$ if $\F=\Z_2$, and equals~$1$ if $|\F|>2$.
\end{lemma}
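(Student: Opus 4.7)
The $n=1$ case is immediate: $M_1(\F)=\F$ is a field, so there are no non-zero two-sided zero divisors and $O(M_1(\F))$ has an empty vertex set. For $n=2$ the vertices are precisely the rank-one matrices over $\F$, and the plan is to encode each such matrix as an outer product $A=vw^t$ with $v,w\in\F^2\setminus\{0\}$ and reduce the orthogonality relation to an elementary condition on the pair of one-dimensional subspaces $(V,W)=(\mathrm{span}\,v,\,\mathrm{span}\,w)$.

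A short direct calculation shows that $v_1w_1^t$ and $v_2w_2^t$ are orthogonal if and only if $w_1^tv_2=0=w_2^tv_1$. Since this condition depends only on the pair of lines, I would introduce, for each ordered pair $(V,W)$ of one-dimensional subspaces of $\F^2$, the set $M_{V,W}:=\{vw^t:v\in V\setminus\{0\},\,w\in W\setminus\{0\}\}$, which consists of $|\F|-1$ rank-one matrices. Orthogonality then factors through the pairs of lines: $A\in M_{V,W}$ is orthogonal to $B\in M_{V',W'}$ precisely when $V'=W^\perp$ and $W'=V^\perp$, where $\perp$ is taken with respect to the standard bilinear form on $\F^2$. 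Hence the connected components of $O(M_2(\F))$ are in bijection with the orbits of the involution $\iota\colon(V,W)\mapsto(W^\perp,V^\perp)$ on the set of pairs of lines.

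I would then split the diameter calculation into the fixed-point and the $2$-orbit cases. \textbf{Fixed points} are pairs with $V\perp W$: here for any $A,B\in M_{V,V^\perp}$ both $AB$ and $BA$ vanish (because $w^tv'=0$), so the component is a complete graph on $|\F|-1$ vertices, yielding diameter $0$ when $|\F|=2$ and diameter $1$ when $|\F|>2$. Parameterizing these fixed pairs by $\mathbb{P}^1(\F)$ identifies them with $V_2$, $V_3$, and the family $V_{6,\alpha,\alpha}$. \textbf{Two-orbits} are pairs with $V\not\perp W$. For $A,A'\in M_{V,W}$ now $AA'=(w^tv')\,vw'^t\ne 0$ because $w^tv'\ne 0$, while every element of $M_{V,W}$ is orthogonal to every element of $M_{W^\perp,V^\perp}$. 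Hence the component is a complete bipartite graph with parts of size $|\F|-1$, so its diameter is $1$ when $|\F|=2$ and $2$ when $|\F|>2$; the corresponding families are $V_1$, $V_{4,\alpha}$, $V_{5,\alpha}$, and $V_{6,\alpha,\beta}$ with $\alpha\ne\beta$.

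The remaining work is bookkeeping rather than conceptual: after parameterizing the lines in $\F^2$ by $\mathbb{P}^1(\F)=\F\cup\{\infty\}$ with the involution $s\mapsto -1/s$ (and $0\leftrightarrow\infty$), I would match the six types of orbits to the six explicit families of the statement. For instance $V_{6,\alpha,\beta}$ comes from $V=\F\cdot(1,\beta)^t$, $W=\F\cdot(-\alpha,1)^t$, on which $\iota$ interchanges $\alpha$ and $\beta$; this simultaneously explains the equality $V_{6,\alpha,\beta}=V_{6,\beta,\alpha}$ and the collapse to a fixed point exactly at $\alpha=\beta$, which is why $V_{6,\alpha,\beta}$ with $\alpha\ne\beta$ requires $|\F|>2$. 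The identifications of $V_1$, $V_{4,\alpha}$, $V_{5,\alpha}$ proceed by analogous but simpler substitutions and introduce no new ideas.
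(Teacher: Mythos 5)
Your argument is correct and complete. Note, however, that the paper does not prove this statement at all: it is imported verbatim as Lemma~4.1 of~\cite{BGM}, so there is no in-paper proof to compare against. What you have supplied is a genuine self-contained proof, and a clean one: writing each rank-one matrix as $vw^t$, observing that $v_1w_1^t\perp v_2w_2^t$ iff $w_1^tv_2=w_2^tv_1=0$, and passing to the involution $\iota\colon(V,W)\mapsto(W^\perp,V^\perp)$ on ordered pairs of lines correctly identifies each connected component as either a complete graph on $M_{V,V^\perp}$ (fixed points of $\iota$, giving $V_2$, $V_3$, $V_{6,\alpha,\alpha}$) or a complete bipartite graph $M_{V,W}\cup M_{W^\perp,V^\perp}$ (two-element orbits, giving $V_1$, $V_{4,\alpha}$, $V_{5,\alpha}$, and $V_{6,\alpha,\beta}$ with $\alpha\ne\beta$), and the diameter counts $0/1/2$ fall out of $|M_{V,W}|=|\F|-1$ exactly as stated. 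The orbit count $(|\F|+1)^2$ matches the six listed families, so the bookkeeping you defer does close. The one point worth making explicit if you write this up is that distinct pairs $(V,W)$ give disjoint sets $M_{V,W}$ (the column and row spaces are invariants of the matrix), which is what guarantees that the orbit of $\iota$ is exactly a connected component and not a proper subset of one.
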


\begin{lemma}
Let $\F$ be a field. Then
the graph $O(T_2)$ is disconnected, and it is a union of its connected subgraphs with the following
sets of vertices:

1. The set $$V_1 =
\left\{{\begin{pmatrix}
   a & 0\\
   0 & 0
   \end{pmatrix}\Big| \, 0\ne a \in \F}\right\} \bigcup \left\{{\begin{pmatrix}
   0 & 0\\
   0 & b
   \end{pmatrix}\Big| \, 0\ne b \in \F}\right\};$$

2.  The set $$V_3=
\left\{{\begin{pmatrix}
   0 & a\\
   0 & 0
   \end{pmatrix}\Big| \, 0\ne a \in \F}\right\};$$ 

3.  For every $ 0\neq\alpha \in \F$  the set
$$ V_{4,\alpha} =
\left\{{\begin{pmatrix}
   c & c\alpha\\
   0 & 0
   \end{pmatrix}\Big| \, 0\ne c \in \F}\right\} \bigcup \left\{{\begin{pmatrix}
   0 & d\\
   0 & -d/\alpha
   \end{pmatrix}\Big| \, 0\ne d \in \F}\right\}.$$

The diameters of the connected components corresponding to all of the vertex sets $V_1$, $V_{4,\alpha}$ equal $1$ if $\F=\Z_2$ and $2$ if $|\F|>2$.
The diameter of the connected component corresponding to the vertex set $V_3$ equals $0$ if $\F=\Z_2$ and $1$ if $|\F|>2$.
\end{lemma}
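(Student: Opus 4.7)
The plan is to reduce to the previous lemma about $O(M_2(\F))$ by observing that $T_2$ sits inside $M_2(\F)$ and that orthogonality is defined by the same product conditions in both. So I would first show that the vertex set and the edge set of $O(T_2)$ are precisely the restriction of those of $O(M_2(\F))$ to the upper triangular matrices, and then identify which of the six component types $V_1,\ldots,V_{6,\alpha,\beta}$ contribute.

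For the vertices, I would note that a non-zero $A=\begin{pmatrix}a & b\\ 0 & c\end{pmatrix}\in T_2$ is a two-sided zero divisor in $T_2$ if and only if $ac=0$, because $T_2$ is a finite-dimensional algebra in which invertibility is equivalent to having non-zero diagonal, and for any singular non-zero $A\in T_2$ one can exhibit explicit non-zero $B,C\in T_2$ (supported on $E_{11},E_{22},E_{12}$) with $AB=CA=0$. This matches exactly the set of singular non-zero upper triangular matrices, which is also precisely the set of matrices from the BGM list that happen to lie in $T_2$. For the edges, since orthogonality is the condition $XY=YX=0$ computed in $M_2(\F)$, an edge between $X,Y\in T_2$ exists in $O(T_2)$ if and only if it exists in $O(M_2(\F))$.

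Next I would go through the six families of the previous lemma and check which elements are upper triangular. The sets $V_2$, $V_{5,\alpha}$, and $V_{6,\alpha,\beta}$ are ruled out: each of their members has a non-zero entry in position $(2,1)$ (explicitly, an entry $a$, $c$, or $-\alpha\beta a$ with $a,c,\alpha,\beta\neq 0$), so none of them lies in $T_2$. The sets $V_1$ and $V_3$ are entirely contained in $T_2$ by inspection, and both types of elements listed in $V_{4,\alpha}$ are already upper triangular. Conversely, a direct case analysis on $A=\begin{pmatrix}a & b\\ 0 & c\end{pmatrix}$ with $ac=0$ shows that $A$ falls into exactly one of $V_1$, $V_3$, or $V_{4,\alpha}$ for a unique $\alpha=b/a$ or $\alpha=-b/c$ (depending on which diagonal entry is non-zero).

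Finally, since the three surviving families are each entirely contained in $T_2$ and edges are preserved, the corresponding connected components of $O(T_2)$ are identical as graphs to those of $O(M_2(\F))$, and hence have the same diameters ($1$ or $2$ for $V_1$ and $V_{4,\alpha}$ depending on whether $\F=\Z_2$ or $|\F|>2$, and $0$ or $1$ for $V_3$ in the same two cases). The only thing to double-check, and the step I would single out as the one most in need of care, is the vertex-set claim that every non-zero singular element of $T_2$ is a two-sided zero divisor in $T_2$ (not merely in $M_2(\F)$); this is the one place where passing from the ambient algebra to the subalgebra could in principle lose or add vertices, and it is handled by the explicit annihilators mentioned above.
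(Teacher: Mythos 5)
Your proposal is correct and follows essentially the same route as the paper: the paper's proof likewise observes that $O(T_2)$ is a subgraph of $O(M_2(\F))$, that $V_1$, $V_3$, $V_{4,\alpha}$ lie entirely in $T_2$, and that $V_2$, $V_{5,\alpha}$, $V_{6,\alpha,\beta}$ contain no upper triangular matrices, then imports the diameters from the previous lemma. Your extra verification that every non-zero singular matrix of $T_2$ is a two-sided zero divisor within $T_2$ itself (so no vertices are lost in passing to the subalgebra) is a detail the paper leaves implicit, and it is a worthwhile addition rather than a deviation.
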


\begin{proof}
The assertion follows from Lemma~\ref{lem_comp_m0.1} because the graph $O(T_2(\F))$ is a subgraph of
$O(M_2(\F))$ and, for all $0\neq \alpha,\beta\in \F$, the vertex sets $V_1,$ $V_3,$ $V_{4,\alpha}$ belong to $T_2(\F)$,
whereas $V_2,$ $V_{5,\alpha},$ $V_{6,\alpha,\beta}$ contain no upper triangular matrix.
\end{proof}

\begin{lemma}\label{lem_case}
Let $\F$ be a field, $n \ge 3$, and matrices $A, B \in T_n$
have the following block forms:
$$A = \begin{pmatrix}
0 & \bar{a} \\
0_{(n-1) \times 1} & A_1
\end{pmatrix}, \
B = \begin{pmatrix}
B_1 & \bar{b} \\
0_{1 \times (n-1)} & 0
\end{pmatrix},$$
where $A_1, B_1 \in T_{n-1}$ are invertible matrices, $\bar{a} \in M_{1,n-1}(\F),\ \bar{b} \in M_{n-1,1}(\F).$
Then
$$1.\ O_{T_n}(A) = \left\{\begin{pmatrix} c_{0} & \bar{c}\\ 0_{(n-1) \times 1} & 0_{n-1} \end{pmatrix}\, \Big|\, \bar{c}=-c_{0}\bar{a}A_1^{-1},\ c_0 \in \F,\ \bar{c} \in M_{1,n-1}(\F) \right\},$$
$$2.\ O_{T_n}(B) = \left\{\begin{pmatrix} 0_{n-1} & \bar{c} \\ 0_{1 \times (n-1)} & c_{0} \end{pmatrix}\, \Big|\, \bar{c}=-B_1^{-1}\bar{b}c_{0},\ c_0 \in \F,\ \bar{c} \in M_{n-1,1}(\F) \right\}.$$
\end{lemma}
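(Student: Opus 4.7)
The proof is essentially a direct block-matrix computation, so my plan is to parametrise a candidate orthogonal $C\in T_n$ using a block form compatible with that of $A$ (resp.\ $B$), expand both products, and read off the constraints.

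For part~1, I write
$$C=\begin{pmatrix} c_0 & \bar{c}\\ 0_{(n-1)\times 1} & C_1\end{pmatrix},\qquad c_0\in\F,\ \bar{c}\in M_{1,n-1}(\F),\ C_1\in T_{n-1},$$
and compute the two block products
$$CA=\begin{pmatrix} 0 & c_0\bar{a}+\bar{c}A_1\\ 0 & C_1A_1\end{pmatrix},\qquad AC=\begin{pmatrix} 0 & \bar{a}C_1\\ 0 & A_1C_1\end{pmatrix}.$$
From $AC=0$ and the invertibility of $A_1$ I get $C_1=0$, which also kills $\bar{a}C_1$ and the bottom block of $CA$ automatically. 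The remaining condition from $CA=0$ is $c_0\bar{a}+\bar{c}A_1=0$, i.e.\ $\bar{c}=-c_0\bar{a}A_1^{-1}$, with $c_0$ free. This yields exactly the set in the statement.

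Part~2 is symmetric. I block a candidate as
$$C=\begin{pmatrix} C_0 & \bar{c}\\ 0_{1\times(n-1)} & c_0\end{pmatrix},\qquad C_0\in T_{n-1},\ \bar{c}\in M_{n-1,1}(\F),\ c_0\in\F,$$
and compute
$$BC=\begin{pmatrix} B_1C_0 & B_1\bar{c}+\bar{b}c_0\\ 0 & 0\end{pmatrix},\qquad CB=\begin{pmatrix} C_0B_1 & C_0\bar{b}\\ 0 & 0\end{pmatrix}.$$
Invertibility of $B_1$ applied to $CB=0$ (or $BC=0$) forces $C_0=0$, after which $C_0\bar{b}$ vanishes and the top-right block of $BC$ reduces to $B_1\bar{c}+\bar{b}c_0=0$, giving $\bar{c}=-B_1^{-1}\bar{b}c_0$.

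There is no real obstacle here; the only thing to be careful about is the order in which the two conditions $AC=0$ and $CA=0$ are imposed, because a priori one must check that neither is redundant in a bad way. The point is that invertibility of the block $A_1$ (respectively $B_1$) makes the $(n-1)\times(n-1)$ block of $C$ vanish, and this single consequence simultaneously disposes of the off-diagonal condition coming from one side while leaving the other side as the one linear equation that determines $\bar{c}$ in terms of $c_0$. Once that observation is in place, both claims follow immediately.
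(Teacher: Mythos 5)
Your proposal is correct and follows essentially the same route as the paper's own proof: parametrise an upper triangular $C$ in the compatible block form, expand $AC$, $CA$ (resp.\ $BC$, $CB$), use invertibility of $A_1$ (resp.\ $B_1$) to force the large block of $C$ to vanish, and read off $\bar{c}$ from the remaining linear condition. No substantive difference.
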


\begin{proof}
1. Assume that $C \in T_n$ has the following block form:
$$C=\begin{pmatrix} c_{0} & \bar{c} \\ 0_{(n-1) \times 1} & C_1 \end{pmatrix},\ C_1\in T_{n-1}.$$
Then $$AC = \begin{pmatrix} 0 & \bar{a}C_1 \\ 0_{(n-1) \times 1} & A_1C_1 \end{pmatrix},\
CA=\begin{pmatrix} 0 & c_{0}\bar{a}+\bar{c}A_1 \\ 0_{(n-1) \times 1} & C_1A_1 \end{pmatrix}.$$
If $C\in O_{T_n}(A)$, $AC=CA=0$. Since $A_1$ is invertible, $C_1=0$ and $\bar{c}=-c_{0}\bar{a}A_1^{-1}.$

2. Assume that $C \in T_n$ has the following block form:
$$C=\begin{pmatrix} C_1 & \bar{c} \\ 0_{1 \times (n-1)} & c_{0} \end{pmatrix},\ C_1\in T_{n-1}.$$
Then $$BC = \begin{pmatrix} B_1C_1 & B_1\bar{c}+\bar{b}c_0 \\ 0_{1 \times (n-1)} & 0 \end{pmatrix},\
CB=\begin{pmatrix} C_1B_1 & C_1\bar{b} \\ 0_{1 \times (n-1)} & 0 \end{pmatrix}.$$
If $C\in O_{T_n}(B)$, $BC=CB=0$. Since $B_1$ is invertible, $C_1=0$ and $\bar{c}=-B_1^{-1}\bar{b}c_{0},$ which completes the proof.
\end{proof}

\begin{defin}
Matrices $A$, $B\in T_n$ satisfying the conditions of the previous lemma will be called {\em bad-1\/} and {\em bad-2\/}, respectively. All the other matrices of $T_n$ are {\em good\/}. A good matrix is called {\em s-good\/} (special good) if it has zero entries in the positions $(1,1)$ and $(n,n)$. A good matrix is called {\em ns-good\/} (not special good) if it is not s-good.
\end{defin}

\begin{rmk}\label{rmk_null}
Note that matrices of $O(T_n)$ have at least one zero entry on their diagonal.
Besides, bad (i.e.\@ bad-1 and bad-2) matrices have only one zero entry that is either $(1,1)$ or $(n,n)$.
Moreover, for each ns-good matrix there exists $i \in \{2, \ldots, n-1\}$ such that its $(i,i)$-entry is zero.
\end{rmk}

\begin{lemma}\label{lem_dist}
For any non-zero singular matrix $A \in T_n$ there exists a rank~$1$ matrix $R_i \in T_n$, corresponding to the zero $(i,i)$-entry of $A$, such that $\dist(A,R_i) \le 1$ in $O(T_n)$.
\end{lemma}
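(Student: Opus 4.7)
The plan is to construct $R_i$ explicitly as a rank-one outer product $R_i = uv^t$ with $u,v\in\F^n$, where $u$ lies in the right kernel of $A$, $v^t$ in the left null space of $A$, and the supports of $u$ and $v$ are arranged so that $uv^t$ is automatically upper triangular. Since $AR_i=(Au)v^t$ and $R_iA=u(v^tA)$, the conditions $Au=0$ and $v^tA=0$ immediately give the orthogonality of $A$ and $R_i$.

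To produce $u$ and $v$, I would pass to principal submatrices of $A$ adapted to the vanishing $(i,i)$-entry. Let $A^{(i)}$ denote the upper-left $i\times i$ principal submatrix of $A$ and $A_{(i)}$ the lower-right $(n-i+1)\times(n-i+1)$ principal submatrix. Both are upper triangular and both have $A_{ii}=0$ on their diagonals, hence are singular. Choose nonzero $\tilde u\in\F^i$ with $A^{(i)}\tilde u=0$ and nonzero $\tilde v\in\F^{n-i+1}$ with $\tilde v^tA_{(i)}=0$. Extend $\tilde u$ to $u\in\F^n$ by zeros in positions $i+1,\dots,n$ and $\tilde v$ to $v\in\F^n$ by zeros in positions $1,\dots,i-1$.

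The main verification is that these extensions remain in the kernel and left null space of the full matrix $A$, and this is exactly where upper triangularity is used: whenever $k>i$ and $l\leq i$ one has $k>l$, so $A_{kl}=0$, and the bottom $n-i$ entries of $Au$ vanish automatically; the argument for $v^tA$ is symmetric. With the prescribed supports, every entry $(uv^t)_{kl}=u_kv_l$ with $k>l$ vanishes, because the only way both factors can be nonzero is $k\leq i\leq l$, which is incompatible with $k>l$. Hence $R_i:=uv^t$ is upper triangular, nonzero, and of rank exactly one. The identities $AR_i=R_iA=0$ then show that $A$ and $R_i$ are orthogonal, and $R_i$ is a vertex of $O(T_n)$ because $A$ itself witnesses it as a nonzero two-sided zero divisor. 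Therefore $\dist(A,R_i)\leq 1$, with equality exactly when $R_i\neq A$.

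I do not anticipate any substantive obstacle; the only point requiring care is the block-triangular bookkeeping that lets the kernel vectors of the two principal submatrices $A^{(i)}$ and $A_{(i)}$ be zero-padded without escaping the kernel and left null space of $A$. The singularity of these blocks, guaranteed by $A_{ii}=0$, supplies both required vectors for free.
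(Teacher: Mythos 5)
Your construction is exactly the paper's: it takes a kernel vector $\hat x$ of the leading $i\times i$ principal block, a left null vector $\hat f$ of the trailing $(n-i+1)\times(n-i+1)$ principal block (both singular because of the zero $(i,i)$-entry), pads each with zeros, and sets $R_i=\bar x\bar f^{\,t}$, checking $AR_i=(A\bar x)\bar f^{\,t}=0=\bar x(A^t\bar f)^t=R_iA$. Your version merely spells out the triangularity bookkeeping (why the padded vectors stay in the null spaces and why $uv^t$ is upper triangular) that the paper leaves implicit; the argument is correct.
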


\begin{proof}
By Remark~\ref{rmk_null} the zero $(i,i)$-entry of $A$ exists.
Assume that $A$ has the following block form
$A = \begin{pmatrix}
A_1 & A_2 \\
0_{(n-i) \times i} & A_3
\end{pmatrix}.$
Then $i \times i$ block $A_1$ is singular.
Therefore, there exists a non-zero vector $\hat x \in \F^i$ such that $A_1 \hat x=0$.
Hence $A$ annihilates a vector $\bar{x}=\hat x \oplus 0_{n-i}$.
Likewise we argue for transpose of $A$ which annihilates a non-zero vector $\bar{f}=0_{i-1} \oplus \hat f$.
Then $R_i=\bar{x}\bar{f}^t$ is the desired upper triangular matrix of rank~$1$, since
$AR_i=\left(A\bar{x}\right)\bar{f}^t=0=\bar{x}\left(A^t\bar{f}\right)^t=R_iA$.
\end{proof}

\begin{rmk}\label{rmk_1n}
If $i \ne 1$, then the first column of $R_i$ is zero, and if $i \ne n$, then the last row of $R_i$ is zero.
\end{rmk}

\begin{theorem}\label{Th_diam_Tr}
Let $\F$ be a field and $n \ge 3$.
Then the graph $O(T_n)$ is connected and $\diam O(T_n)=4$.
\end{theorem}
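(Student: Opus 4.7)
The plan is to use $E_{1n}$ as a near-universal hub for the upper bound and to exhibit an explicit pair of bad matrices achieving distance~$4$ for the lower bound. A direct computation gives that $E_{1n}$ is orthogonal in $T_n$ to $C$ iff $C_{11}=C_{nn}=0$, so $E_{1n}$ is adjacent in $O(T_n)$ to every s-good vertex distinct from itself. The whole argument will be organized around the four-type vertex classification (s-good, ns-good, bad-1, bad-2) introduced just before the theorem.

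For the upper bound I first claim every non-bad vertex is within distance~$2$ of $E_{1n}$. An s-good vertex is already adjacent to $E_{1n}$. For an ns-good $A$, Remark~\ref{rmk_null} produces an index $i\in\{2,\ldots,n-1\}$ with $A_{ii}=0$; Lemma~\ref{lem_dist} then yields a rank-one neighbor $R_i$ of $A$, and by Remark~\ref{rmk_1n} both the first column and the last row of $R_i$ vanish, so $R_i$ is adjacent to $E_{1n}$. Routing through the hub thus gives $d(A,B)\le 4$ for every pair of non-bad endpoints.

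The crux is handling bad endpoints. By Lemma~\ref{lem_case} a bad vertex $A$ has, up to scalar, a single rank-one neighbor $N_A$, equal to $e_1 v_A^t$ with $(v_A)_1\ne 0$ if $A$ is bad-1 and to $w_A e_n^t$ with $(w_A)_n\ne 0$ if $A$ is bad-2; this $N_A$ is never adjacent to $E_{1n}$. Orthogonality of an upper-triangular $D$ to $N_A$ reduces to vanishing of one diagonal entry of $D$ together with one linear form in its entries ($v_A^t D=0$ or $Dw_A=0$). For each pairing involving a bad endpoint I will construct a rank-one middle vertex $D$ giving a length-$4$ path. If $A$ is bad-1 and $B$ is bad-1, ns-good, or s-good, the ansatz $D=ue_n^t$ (supported on the last column) with $u\in\F^n$ chosen to kill the two relevant linear forms yields a nonzero solution for $n\ge 3$ by intersecting at most two hyperplanes in $\F^n$; the case $A$ bad-2 is symmetric. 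The remaining case $A$ bad-1, $B$ bad-2 requires a mixed rank-one $D=uv^t$ with $u$ supported on $\{1,2\}$ satisfying $v_A^t u=0$ and $v$ supported on $\{2,\ldots,n\}$ satisfying $v^t w_B=0$; such $D$ is automatically upper triangular with zero $(1,1)$ and $(n,n)$ entries, and existence again follows from a hyperplane count. In each subcase the resulting path is $A\to N_A\to D\to N_B\to B$ (or $A\to N_A\to D\to E_{1n}\to B$, respectively $A\to N_A\to D\to R_j\to B$, when $B$ is s-good, respectively ns-good), of length at most~$4$.

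For the matching lower bound I will exhibit a concrete bad-1 $A$ and bad-2 $B$ in $T_3$ (extended by an invertible diagonal block for $n>3$) for which $N_A N_B\ne 0$. Since every walk out of $A$ must start with a scalar multiple of $N_A$ and every walk into $B$ must end with a scalar multiple of $N_B$, and scaling preserves non-orthogonality, no walk of length $\le 3$ can connect $A$ to $B$, so $d(A,B)=4$. Combined with the upper bound this yields $\diam O(T_n)=4$; connectedness of $O(T_n)$ follows as a free consequence. The main obstacle throughout is the uniform existence of the rank-one middle vertex $D$ in the bad cases; in each subcase this reduces to finding a nonzero vector in an intersection of two hyperplanes in $\F^n$, which succeeds for all $n\ge 3$.
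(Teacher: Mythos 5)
Your proposal is correct and follows essentially the same route as the paper: the same s-good/ns-good/bad-1/bad-2 case analysis, $E_{1n}$ as a hub reached from good vertices via the rank-one matrices of Lemma~\ref{lem_dist}, the unique (up to scalar) rank-one neighbours from Lemma~\ref{lem_case} as first and last steps out of bad vertices with a rank-one middle vertex found by intersecting at most two hyperplanes, and a lower bound from a pair whose orthogonal sets are single lines spanned by non-orthogonal rank-one matrices. The only cosmetic differences are the exact shape of the middle vertices and the lower-bound witness (the paper uses $I_n-E_{11}$ and $J_n$, with $O_{T_n}$-sets $\F E_{11}$ and $\F E_{1n}$, rather than a literal bad-1/bad-2 pair), which you sketch but do not write out; such a pair plainly exists and the argument is identical.
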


\begin{proof}
Let $A$, $B\in T_n$ be two non-zero singular matrices.
We show that  there is~a path between $A$ and $B$  in $O(T_n)$ of length at most~$4$.
The general situation splits into the 3 following cases.

1. First, assume that both $A$ and $B$ are good.
We have the following subcases.

1.1. Suppose that both $A$ and $B$ are s-good.
Since the last row and the first column of both matrices are zero, we have the path
$$A - E_{1n} - B.$$

1.2. Suppose that both $A$ and $B$ are ns-good.
Then by Remark~\ref{rmk_null} there exist $i,j~\in \{2, \ldots, n-1\}$ such that $(i,i)$-entry of $A$ and $(j,j)$-entry of $B$ are zero.
By Lemma~\ref{lem_dist} we can find non-zero upper triangular rank~$1$ matrices $R_i=\bar{x}\bar{f}^t$ and $R_j=\bar{y}\bar{h}^t$ corresponding to the zero $(i,i)$-entry of $A$ and the zero $(j,j)$-entry of $B$, respectively, with $\dist(A,R_i) \le 1$, $\dist(B,R_j) \le 1$ in $O(T_n)$.
Since $1 < i,j < n$, by Remark~\ref{rmk_1n} the last row and the first column of both $R_i$ and $R_j$ are zero.
Hence we have the path
$$A - R_i - E_{1n} - R_j - B.$$

1.3. Without loss of generality suppose that $A$ is s-good and $B$ is ns-good.
As in item 1.2, we can find the matrix $R_j$ that is orthogonal to $B$ and $E_{1n}$.
Hence we have the path
$$A - E_{1n} - R_j - B.$$

2. Consider the case when both $A$ and $B$ are bad. We have the following subcases.

2.1. Suppose that both $A$ and $B$ are bad-1.
By Lemma~\ref{lem_case} we can find non-zero matrices
$$A_1 = \begin{pmatrix} a_{1_{0}} & \bar{a_1}\\ 0_{(n-1) \times 1} & 0_{n-1} \end{pmatrix},\
B_1 = \begin{pmatrix} b_{1_{0}} & \bar{b_1}\\ 0_{(n-1) \times 1} & 0_{n-1} \end{pmatrix}$$
that are orthogonal to $A$ and $B$, respectively.
Since $n \ge 3$, there exists a non-zero element $\bar{d} \in \F^{n}$ such that $A_1\bar{d}=B_1\bar{d}=0$.
If $D$ is a matrix with first $n-1$ zero columns and its last column is $\bar{d}$, then we have the path
$$A - A_1 - D - B_1 - B.$$

2.2. Suppose that both $A$ and $B$ are bad-2.
By Lemma~\ref{lem_case} we can find non-zero matrices
$$A_2 = \begin{pmatrix} 0_{n-1} & \bar{a_2}\\ 0_{1 \times (n-1)} & a_{2_{0}} \end{pmatrix},\
B_2 = \begin{pmatrix} 0_{n-1} & \bar{b_2}\\ 0_{1 \times (n-1)} & b_{2_{0}} \end{pmatrix}$$
that are orthogonal to $A$ and $B$, respectively.
Since $n \ge 3$, there exists a non-zero element $\bar{d'} \in M_{1,n}(\F)$ such that $\bar{d'}A_7=\bar{d'}B_5=0$.
If $D'$ is a matrix whose first row is $\bar{d'}$ and the other rows are zero, then we have the path
$$A - A_2 - D' - B_2 - B.$$

2.3. Without loss of generality suppose that $A$ is bad-1 and $B$ is bad-2.
As in items 2.1 and 2.2, we can find non-zero matrices
$$A_1 = \begin{pmatrix} a_{1_{0}} & \bar{a_1}\\ 0_{(n-1) \times 1} & 0_{n-1} \end{pmatrix},\
B_2 = \begin{pmatrix} 0_{n-1} & \bar{b_2}\\ 0_{1 \times (n-1)} & b_{2_{0}} \end{pmatrix}$$
that are orthogonal to $A$ and $B$, respectively.
Let the first entry of the vector $\bar{a_1}$ be $a_{1_{1}}$, and let the first two entries of the vector $\bar{b_2}$ be $b_{2_{1}}$ and $b_{2_{2}}$.
There exists a non-zero matrix $L' \in M_2(\F)$ such that
$\begin{pmatrix} a_{1_{0}} & a_{1_{1}} \end{pmatrix}L'=L'\begin{pmatrix} b_{2_{1}} \\ b_{2_{2}} \end{pmatrix}=0$.
Then if we put $L~=~\begin{pmatrix}0_{2\times(n-2)}&L'\\ 0_{(n-2)\times (n-2)}&0_{(n-2)\times 2}\end{pmatrix}$,
we have the path $$A - A_1 - L - B_2 - B.$$

3. Now without loss of generality suppose that $A$ is good and $B$ is bad. We have the following subcases.

3.1. Suppose that $A$ is ns-good and $B$ is bad-1.
As in items 1.2 and 2.1, we can find non-zero matrices
$$R_i=\bar{x}\bar{f}^t = \begin{pmatrix} 0_{(n-1) \times 1} & R_i'\\ 0 & 0_{1 \times (n-1)} \end{pmatrix},\
B_1 = \begin{pmatrix} b_{1_{0}} & \bar{b_1}\\ 0_{(n-1) \times 1} & 0_{n-1} \end{pmatrix}$$
that are orthogonal to $A$ and $B$, respectively.
Since $R_i'$ is singular, there exists a non-zero element $\bar{c} \in \F^{n-1}$ such that $R_i'\bar{c}=0$.
If we put
$C = \begin{pmatrix} 0_{1 \times (n-1)} & c_0 & \\ 0_{n-1} & \bar{c} \end{pmatrix},$
where $c_0 = -\bar{b_1}\bar{c}/b_{1_0},$
we have the path $$A - R_i - C - B_1 - B.$$

3.2. Suppose that $A$ is s-good and $B$ is bad-1.
Since the matrix $A$ is orthogonal to $E_{1n}=\begin{pmatrix} 0_{(n-1) \times 1} & E_{1n}'\\ 0 & 0_{1 \times (n-1)} \end{pmatrix}$,
where $E_{1n}'=E_{1,n-1} \in M_{n-1}(\F) \setminus GL_{n-1}(\F)$, then, as in item~3.1, we can find the path $$A - E_{1n} - C' - B_1 - B.$$

3.3. Suppose that $A$ is ns-good and $B$ is bad-2.
As in items 1.2 and 2.2, we can find non-zero matrices
$$R_i=\bar{x}\bar{f}^t = \begin{pmatrix} 0_{(n-1) \times 1} & R_i'\\ 0 & 0_{1 \times (n-1)} \end{pmatrix},\
B_2 = \begin{pmatrix} 0_{n-1} & \bar{b_2}\\ 0_{1 \times (n-1)} & b_{2_{0}} \end{pmatrix}$$
that are orthogonal to $A$ and $B$, respectively.
Since $R_i'$ is singular, there exists a non-zero element $\bar{c_1} \in M_{1,n-1}(\F)$ such that $\bar{c_1}R_i'=0$.
If we put
$C_1 = \begin{pmatrix} \bar{c_1} & c_{1_0} & \\ 0_{n-1} & 0_{(n-1) \times 1} \end{pmatrix},$
where $c_{1_0} = -\bar{c_1}\bar{b_2}/b_{2_0},$
we have the path $$A - R_i - C_1 - B_2 - B.$$

3.4. Suppose that $A$ is s-good and $B$ is bad-2.
Since the matrix $A$ is orthogonal to $E_{1n}=\begin{pmatrix} 0_{(n-1) \times 1} & E_{1n}'\\ 0 & 0_{1 \times (n-1)} \end{pmatrix}$,
where $E_{1n}'=E_{1,n-1} \in M_{n-1}(\F) \setminus GL_{n-1}(\F)$,
then, as in item~3.3, we can find the path $$A - E_{1n} - C'' - B_2 - B.$$

Thus we have shown that that $\diam O(T_n) \le 4$ in all cases as desired.

Now we claim that $\diam O(T_n) = 4$.
Let $\hat A = I_n-E_{11}$ and
$\hat B = J_n.$
Straightforward computations show that
$$\ O_{T_n}(\hat A) = \left\{\alpha E_{11} \, \Big|\, \alpha \in \F \right\},\
O_{T_n}(\hat B) = \left\{\alpha E_{1n} \, \Big|\, \alpha \in \F\right\}.$$
Clearly, $E_{11}E_{1n} \neq 0$, hence $\dist(\hat A,\hat B)>3$ and the proof is completed.
\end{proof}

\section*{Acknowledgements}

The author is grateful to his scientific supervisor Alexander E. Guterman for posing this problem and fruitful discussions.

\end{document}